\newtheorem{theorem}{Theorem}[section]
\newtheorem{lemma}[theorem]{Lemma}
\theoremstyle{remark}
\newtheorem{claim}{Claim}
\def\lqedsymbol{\ifmmode$\lrcorner$\else{\unskip\nobreak\hfil
\penalty50\hskip1em\null\nobreak\hfil$\rule{1.2ex}{1.2ex}$
\parfillskip=0pt\finalhyphendemerits=0\endgraf}\fi} 
\newenvironment{claimproof}[1][\proofname]
  {%
    \proof[#1]%
  }
{%
    \endproof%
  }
\crefname{section}{section}{sections}
\crefname{lemma}{lemma}{lemmata}
\crefname{theorem}{theorem}{theorems}
\crefname{figure}{figure}{figures}
\crefname{claim}{claim}{claims}
\newcommand{\braces}[1]{\lbrace #1 \rbrace}
\newcommand{\mcalH}{\mathcal{H}}
\newcommand{\GF}{G_\text{FP}}
\newcommand{\GS}{G_\text{SP}}
\newcommand{\Kalephzero}{K^{\aleph_0}}
\newcommand{\kthree}{K^3}
\newcommand{\kfour}{K^4}
\newcommand{\kvimi}{K^4_{-}}
\newcommand{\kfive}{K^5}
\newcommand{\fp}{the first player\xspace}
\newcommand{\secp}{the second player\xspace}
\newcommand{\Secp}{The second player\xspace}
\newcommand{\fps}{the first player's\xspace}
\newcommand{\secps}{the second player's\xspace}
\newcommand{\FP}{The first player\xspace}
\newcommand{\fpdegree}{$F$-degree\xspace}
\newcommand{\secpdegree}{$S$-degree\xspace}
\newcommand{\SetOfPi}{P}
\newcommand{\fpmain}{$\fpmainInMath$\xspace}
\newcommand{\secpmain}{$\secpmainInMath$\xspace}
\newcommand{\secpleft}{$\secpleftInMath$\xspace}
\newcommand{\secpright}{$\secprightInMath$\xspace}
\newcommand{\fpmainInMath}{m}
\newcommand{\secpmainInMath}{n}
\newcommand{\secpleftInMath}{\ell}
\newcommand{\secprightInMath}{r}
\colorlet{FreeEdgeColour}{green!60!black}
\colorlet{FirstPlayerColour}{red}
\colorlet{DesiredMoveColour}{green!60!black}
\colorlet{DesiredAlternativeMoveColour}{green!60!black}
\title{The $K^4$-game}
\author[N.\ Bowler]{Nathan Bowler}
\author[F.\ Gut]{Florian Gut}
\address{Universit\"at Hamburg, Department of Mathematics, Bundesstrasse 55 (Geomatikum), 20146 Hamburg, Germany}
\email{\{nathan.bowler, florian.gut\}@uni-hamburg.de}
\keywords{Games on graphs, infinite game, complete graph, complete graph game, graph building game}
\begin{document}

\begin{abstract}
    We investigate a two player game called the $K^4$-building game: two players alternately claim edges of an infinite complete graph.
    Each player’s aim is to claim all six edges on some vertex set of size four for themself.
    The first player to accomplish this goal is declared the winner of the game.
    We present a winning strategy which guarantees a win for the first player.
\end{abstract}

\maketitle

\section{Introduction}\label{sec:introduction}

While games of incomplete information are studied in economics, games of complete information are of great interest for mathematicians.
This field was seeded by very simple well known games such as tic-tac-toe or hex, sparking the invention of new games up to arbitrarily complex ones.
\emph{Game of complete information} means that all players have all the relevant information about the game at any point.
Thus analysing such games often comes down to investigating every possible board state and the resulting number of plays of the game that must be considered.
This prompted the term \emph{combinatorial games} for these kinds of games.
We refer to Beck's book about combinatorial games \cite{B08} for a comprehensive overview.

We study a particular two player game played by the \emph{first player} and the \emph{second player}.
Beginning with \fp they alternately colour exactly one uncoloured edge of a sufficiently large complete graph $B$, the \emph{board}, in their respective colour.
The players agree upon some graph $G$ before the start of the game and their respective aim is to be the first to have a copy of $G$ contained in their coloured subgraph.
We call this the $(B,G)$\emph{-game} or the $G$\emph{-building game} if the board does not matter or is clear from context.

This type of game is closely connected to the area of Ramsey theory.
Let $R(k)$ stand for the smallest number $n$ of vertices such that for any colouring of $K^n$ with two colours there is a monochromatic $K^k$ contained as a subgraph.
By Ramsey's theorem (see e.g. \cite{D17}*{Theorem 9.1.1}), in the $G$-building game if the board is a complete graph of size at least $n \geq R(\vert G\vert)$ then after all the edges of the board have been claimed at least one of the players' graphs must contain a copy of $G$.
Thus, as long as $B$ is finite, Ramsey's theorem implies the existence of a winning strategy for one of the players. 
Furthermore by a game theoretic argument called \emph{strategy stealing} (see e.g. \cite{HKSS14} by Hefetz, Krivelevich, Stojakovi\'c, and Szab\'o) this must be \fp.
While this argument proves the existence of a winning strategy for \fp, there is no information about what such a strategy looks like.

If instead the countably infinite complete graph $\Kalephzero$ is chosen as the board, very different considerations come into play.
Since the edges of the board need not be exhausted in this version of the game, Ramsey's theorem can no longer be applied, i.e. it may happen that neither of the players finishes his or her copy of $G$.
In that case \secp is declared the winner of the game.
One can imagine this in such a way that \secp quickly builds some substructure and then forces the first player to make particular non-winning moves from this finite point on in order to prevent him from winning.
This is not at all far fetched as shown by Hefetz, Kusch, Narins, Pokrovskiy, Requil\'e and Sarid in~\cite{HKNPRS17} where they investigate a $5$ uniform version of this game:
The board is the infinite complete $5$ uniform hyper graph and Hefetz et al prove the existence of a $5$ uniform hyper graph $\mcalH$ for which the second player can delay the game indefinitely before the first player can finish her copy of $\mcalH$.
This illustrates that an infinite board also has implications for the strategy stealing argument.
The reason for this is that the goal of the game are no longer symmetric for the two players.
\Secp will be happy if he manages to delay the game indefinitely, but if \fp steals this strategy, she is not declared the winner of the game.

The deliberations above imply that when the board is infinite our best bet is to find an explicit strategy for \fp.
Such a strategy can then also be used in the $(B,G)$-game so long as $B$ has sufficient size.
In this paper we analyse the $(\Kalephzero,\kfour)$-game and indeed, there is a winning strategy for \fp which we present in \cref{sec:K_4_game}.
The strategy is also a winning strategy for \fp in the $(K^n,\kfour)$-game if $n$ is at least $17$, thus in particular there is a winning strategy for \fp in the $(K^{17},\kfour)$-game.
Therefore there is a winning strategy on a board that is smaller than suggested by Ramsey theory, as $R(4)=18$ which was proved by Greenwood and Gleason in \cite{GG55}*{Section 3}.

The winning strategy presented in \cref{sec:K_4_game} draws on ideas first introduced by Beck in \cite{B02}*{Section 5} in 2002.
More precisely, our strategy broadly follows the one presented in \cite{B02}.
Unfortunately, the latter pools together some cases that must be considered separately and, more importantly, claims that some cases must not be investigated further for brevity but in fact there are ways to win for \secp in these cases.
We examine these issues in \cref{sec:original_proof}.
Beck noticed these shortcomings and therefore posed finding an explicit strategy for the $(K^{R(4)},\kfour)$-game as a question in \cite{B08}*{Open Problem 4.6} in 2008, which we answer in this paper.
Our proof deals with all cases separately, which means there are a large number of cases, thus we use a computer algorithm to validate \fps strategy in all of them.
This allows us to also deduce that it takes \fp at most 21 turns to win the game.

The graph $G$ that the players aim to build in the presented strong game is very small, it only has four vertices.
It appears futile to use a similar approach to find a winning strategy in $G$-building games for larger graphs $G$, even in the $(\Kalephzero,\kfive)$-game, thus new techniques must be found to give an explicit winning strategy in the $\kfive$-game.
Another possible twist in the $(\Kalephzero,G)$-game is to also allow $G$ to be infinite.
Recent work has demonstrated that this is an interesting field of inquiry with many open questions, see \cite{BEG23}.

\section{Preliminaries}\label{sec:preliminaries}

Throughout this paper we will draw on the definitions as established in \cite{D17}, if not explicitly mentioned otherwise.
We will assume that \fp chooses the colour red and refer to that player as \emph{she} or \emph{her}.
Likewise we will assume that \secp chooses the colour blue and refer to that player as \emph{he} or \emph{him}.
By a \emph{turn} of a player we mean that that player chooses an edge of the board that has not been coloured by either of the players and colours that edge in her or his respective colour.
We will alternatively also call a turn a \emph{move} interchangeably and also say that that player \emph{claims} an edge.
A sequence of one move by \fp and then one by \secp is called a \emph{round}.
The turns are enumerated by their chronological order and so are the rounds.
Thus after the \emph{first round}, there are precisely two coloured edges on the board, one edge of \fp and one of \secp.
By the $n$\emph{-th} turn or the $n$\emph{-th} move of \fp we mean the $n$-th time she claims an edge and similarly for the $n$\emph{-th} turn and $n$\emph{-th} move of \secp.
When we say that a player \emph{connects} a vertex $v$ to a vertex $w$, we mean that that player claims the edge $vw$.
We will mean the same when we say that a player \emph{plays from $v$ to $w$}.
We define $E(\GF)$ to be the edges that \fp has claimed up to that point and $V(\GF)$ are the vertices of the board that are incident with at least one edge of $E(\GF)$.
With this we define $\GF := (V(\GF),E(\GF))$.
We define $E(\GS)$, $V(\GS)$ and $\GS$ similarly for \secp.
If we write \emph{a fresh vertex} we mean a vertex $v \in  V \setminus V(\GF \cup \GS) $.
For a vertex $v$ of the board we define the \emph{\fpdegree} of $v$ to be the degree of $v$ in $\GF$ if $v \in \GF$ and $0$ otherwise.
Similarly for the \emph{\secpdegree} and $\GS$.
By $\kvimi$ we denote the unique graph on $4$ vertices with $5$ edges.
A \emph{threat} is a monochromatic induced subgraph of $\GF \cup \GS$ that is  isomorphic to $\kvimi$.
Based on this we call a graph $H$ a \emph{threat seed graph} or simply a \emph{threat seed} if it is a monochromatic induced subgraph of $\GF \cup \GS$ and it is a graph with precisely four vertices and four edges.
That is, it is isomorphic to either a cycle of length four or a triangle with an attached edge.

To keep figures as clear as possible we always just depict the edges which have been claimed by either player as well as their incident vertices.
Additionally, if a figure illustrates a possible course of the game up to some turn, we indicate the order in which edges in that figure have been claimed by numbers that are depicted on the edges.
For $i < j$ the edge depicted with $i$ is claimed before the one depicted with $j$.

\section{Problems with an earlier strategy}\label{sec:original_proof}

In the proof of \cite{B02}*{Theorem A.1} the discussion of case ``L(3)'' has a flaw, the assertion ``The graph of the first five blue edges always forms a path or two separated paths with some red edges between them.'' does not hold true.
Note that in \cite{B02} the colour blue refers to \secp and red refers to \fp, as in this paper.
We present a course of the game that falls under L(3) and in which after five turns $\GS$ is a tree with three leaves.
This has a large enough impact that if \fp follows the strategy given in \cite{B02} then \secp can win the game, as illustrated in \cref{fig:SP_can_win_L(3)}.
\begin{figure}[ht]
    \centering
    \begin{tikzpicture}[scale=.75]
        \foreach \coordinate/\name in {(1, 1)/A,(1, -1)/B,(-2, 0)/C,(4, 0)/D,(-.5, -3)/K,(2.5, -3)/L,(-2, 6)/P1,(1, 8)/P2,(4, 6)/P3,(6, 4)/P4} \node[shape=coordinate] at \coordinate (\name) {};
        \node[left] at (A) {$A$};
        \node[below] at (B) {$B$};
        \node[left] at (C) {$C$};
        \node[right] at (D) {$D$};
        \node[below] at (K) {$K$};
        \node[below] at (L) {$L$};
        \node[above left] at (P1) {$P_1$};
        \node[above] at (P2) {$P_2$};
        \node[above] at (P3) {$P_3$};
        \node[above] at (P4) {$P_4$};
        \draw[red, very thick] 
            (A) to node[fill=white,inner sep = 1pt,rounded corners] {$0$} (C)
            (A) to node[fill=white,inner sep = 1pt,rounded corners,pos=.52] {$2$} (K)
            (B) to node[fill=white,inner sep = 0.8pt,rounded corners, pos=.7] {$4$} (A)
            (C) to node[fill=white,inner sep = 1pt,rounded corners] {$6$} (B)
            (B) to node[fill=white,inner sep = 1pt,rounded corners,pos=.4] {$8$} (D)
            (A) to node[fill=white,inner sep = 1pt,rounded corners] {$10$} (D)
            (A) to node[fill=white,inner sep = 1pt,rounded corners] {$12$} (P1)
            (D) to node[fill=white,inner sep = 1pt,rounded corners, near end] {$14$} (P1)
            (A) to node[fill=white,inner sep = 1pt,rounded corners] {$16$} (P2)
            (A) to node[fill=white,inner sep = 1pt,rounded corners,pos=.45] {$18$} (P3)
            (A) to node[fill=white,inner sep = 1pt,rounded corners] {$20$} (P4)
            (L) to node[fill=white,inner sep = 1pt,rounded corners] {$22$} (D)
            (P1) to [out=285,in=130] node[fill=white,inner sep = 1pt,rounded corners] {$24$} (L)
            (P3) to [out=228,in=90] node[fill=white,inner sep = 1pt,rounded corners, pos=.25] {$26$} (K);
        \draw[blue, very thick]
            (L) to node[fill=white,inner sep = 1pt,rounded corners] {$1$} (A)
            (C) to node[fill=white,inner sep = 1pt,rounded corners] {$3$} (K)
            (K) to node[fill=white,inner sep = 1pt,rounded corners] {$5$} (B)
            (K) to node[fill=white,inner sep = 1pt,rounded corners] {$7$} (L)
            (C) to node[fill=white,inner sep = 1pt,rounded corners, near end] {$9$} (D)
            (C) to node[fill=white,inner sep = 1pt,rounded corners, near end] {$11$} (L)
            (C) to node[fill=white,inner sep = 1pt,rounded corners] {$13$} (P1)
            (B) to node[fill=white,inner sep = 1pt,rounded corners] {$15$} (P1)
            (B) to [out=80,in=280] node[fill=white,inner sep = 1pt,rounded corners, near end] {$17$} (P2)
            (B) to node[fill=white,inner sep = 1pt,rounded corners] {$19$} (P3)
            (K) to node[fill=white,inner sep = 1pt,rounded corners, near end] {$21$} (D)
            (K) to node[fill=white,inner sep = 1pt,rounded corners] {$23$} (P1)
            (P1) to node[fill=white,inner sep = 1pt,rounded corners, near start] {$25$} (P3)
            (P1) to node[fill=white,inner sep = 2pt,rounded corners] {$27$} (P2);
        \foreach \x in {A,B,C,D,K,L,P1,P2,P3,P4} \draw[fill] (\x) circle [radius=.09];
    \end{tikzpicture}
    \caption{A course of the game in case ``L(3)'' in the proof of \cite{B02}*{Theorem A.1} in which \secp can win the game. Note that the edges labelled \textcolor{red}{$22$}, \textcolor{red}{$24$} and \textcolor{red}{$26$} are forced moves for \fp in that they are reactions to threats by \secp. After claiming the edge $P_1P_2$, \secp can win by either claiming $P_2 P_3$ or $K P_2$ with his next move.}
    \label{fig:SP_can_win_L(3)}
\end{figure}
Note that a similar problem arises if \fp claims the edges $BP_x$ instead of $AP_x$ in her turns.
Thus, in case $\GF \cup \GS$ is isomorphic to the graph mentioned in L(3) or a similar one (see \cref{fig:stage_6_special_cases}), \fp must instead divert from her usual course of play.
We present a possible strategy for this special case in \labelcref{item:stage_six} of the $\kfour$ building strategy presented in \cref{sec:K_4_game}.

The case ``L(4)'' also has an issue.
The second to last paragraph of the proof of \cite{B02}*{Theorem A.1} asserts that in case L(4) if there is no blue triangle either containing the vertex $B$ or the vertex $C$ after six turns, then \fp can continue with the standard strategy of the proof and win.
This is not true.
In \cref{fig:SP_can_win_L(4)} we present a possible course of the game falling under case L(4) where after six turns there is no blue triangle at all but \secp wins that game if \fp adheres to her regular strategy.
Thus in that case \fp must use a different strategy for any of the board states depicted in \cref{fig:stage_5_special_cases}.
We define a possible strategy in \labelcref{item:stage_five} of the $\kfour$ building strategy.
\begin{figure}[ht]
    \centering
    \begin{tikzpicture}[scale=.8]
        \foreach \coordinate/\name in {(-2, 0)/A,(0, -3)/K,(1, -1)/C,(4, 0)/D,(1, 1)/B,(2, -3)/L,(1, -5)/M,(-2, 4.5)/P1,(4, 4.5)/P2,(6, 3)/P3} \node[shape=coordinate] at \coordinate (\name) {};
        \node[above] at (A) {$A$};
        \node[above] at (B) {$B$};
        \node[left] at (C) {$C$};
        \node[right] at (D) {$D$};
        \node[below left] at (K) {$K$};
        \node[right] at (L) {$L$};
        \node[below] at (M) {$M$};
        \node[above] at (P1) {$P_1$};
        \node[above] at (P2) {$P_2$};
        \node[right] at (P3) {$P_3$};
        \draw[red, very thick] 
            (A) to node[fill=white,inner sep = 1pt,rounded corners] {$0$} (C)
            (A) to node[fill=white,inner sep = 1.75pt,rounded corners] {$2$} (K)
            (A) to node[fill=white,inner sep = 1pt,rounded corners] {$4$} (B)
            (B) to node[fill=white,inner sep = 1pt,rounded corners, near end] {$6$} (C)
            (B) to node[fill=white,inner sep = 1pt,rounded corners] {$8$} (D)
            (C) to node[fill=white,inner sep = 1pt,rounded corners] {$10$} (D)
            (B) to node[fill=white,inner sep = 1pt,rounded corners] {$12$} (P1)
            (B) to node[fill=white,inner sep = 2pt,rounded corners] {$14$} (P2)
            (B) to node[fill=white,inner sep = 1pt,rounded corners] {$16$} (P3)
            (B) to [out=280,in=80] node[fill=white,inner sep = 1pt,rounded corners,near end] {$18$} (M)
            (C) to node[fill=white,inner sep = 1pt,rounded corners, pos =.675] {$20$} (L)
            (M) to [out=150,in=230] node[fill=white,inner sep = 1pt,rounded corners,pos=.1] {$22$} (P1);
        \draw[blue, very thick]
            (B) to [out=300,in=90] node[fill=white,inner sep = 2pt,rounded corners, pos=.65] {$1$} (L)
            (K) to node[fill=white,inner sep = 1pt,rounded corners, near end] {$3$} (C)
            (K) to node[fill=white,inner sep = 2pt,rounded corners, pos=.3] {$5$} (B)
            (K) to node[fill=white,inner sep = 2pt,rounded corners] {$7$} (M)
            (A) to node[fill=white,inner sep = 1pt,rounded corners,near end] {$9$} (D)
            (M) to node[fill=white,inner sep = 1pt,rounded corners] {$11$} (L)
            (C) to node[fill=white,inner sep = 1pt,rounded corners] {$13$} (P1)
            (C) to node[fill=white,inner sep = 1pt,rounded corners,near end] {$15$} (P2)
            (K) to node[fill=white,inner sep = 0pt,rounded corners, pos=.3] {$17$} (L)
            (C) to node[fill=white,inner sep = 0pt,rounded corners, pos=.3] {$19$} (M)
            (K) to [out=150,in=240] node[fill=white,inner sep = 1pt,rounded corners] {$21$} (P1)
            (K) to [out=90,in=190] node[fill=white,inner sep = 1pt,rounded corners,near end] {$23$} (P2);
        \foreach \node in {A,B,C,D,K,L,M,P1,P2,P3} \draw[fill] (\node) circle [radius=.09];
    \end{tikzpicture}
    \caption{A course of the game in case ``L(4)'' in the proof of \cite{B02}*{Theroem A.1} in which \secp can win the game. Note that the edges labelled \textcolor{red}{18}, \textcolor{red}{20} and \textcolor{red}{22} are moves in which \fp reacts to threats by \secp. In his next move \secp can win by claiming either $P_1P_2$ or $MP_2$.}
    \label{fig:SP_can_win_L(4)}
\end{figure}
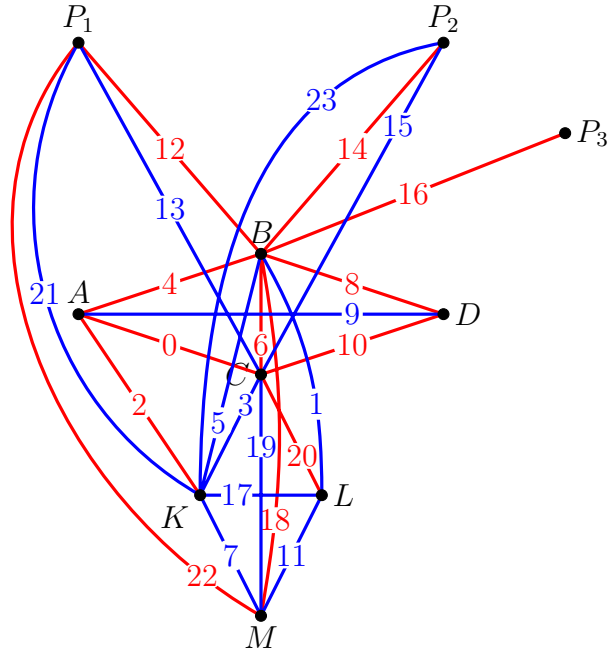

Finally, there is another inaccuracy in the case ``L(4)''
In the subcase where there are two blue triangles, one containing the vertex $B$ and one containing the vertex $C$ of case ``L(4)'', the proof says that \fp should divert from her usual strategy and instead claim $BD_1$, $BD_2$ and $BD_3$ in her sixth, seventh and eighth turn respectively for suitable fresh vertices $D_1$, $D_2$ and $D_3$.
The only course of the game that is considered is the one where \secp reacts by claiming $AD_x$ in each of these three turns, claiming that if he does not do this then \fp can claim $AD_x$ instead and the board is then isomorphic to the board in one of the other cases.
Unfortunately this is not true.
Suppose \secp does not claim $AD_3$.
Then \fp claims $AD_3$ and \secp must claim $CD_3$ in his next turn since that is a threat by \fp.
Under a desired isomorphism each $D_x$ would then need to be mapped to a $P_y$, except for $D_3$ which would be mapped to $D$ (which in turn must also be mapped to a $P_y$.
Suppose \secp had claimed $D_1D_2$ instead of $AD_3$, then that maps to an edge $P_iP_j$ and the proof never considers a situation where there is precisely one blue edge and no red edge on the subgraph of the board induced by $\braces{P_i \colon i \in \braces{1,2,3,4,5}}$.
Moreover, if \fp adheres to the described strategy then there is a way for \secp to win the game.
We deal with this special case in \labelcref{item:stage_five}.

\section{The \texorpdfstring{$\kfour$}{K4}-building game}\label{sec:K_4_game}

We begin by investigating the game in which the player's aim is to construct a $\kthree$, the proof is folklore, we state it here to keep this paper self-contained.
We define the $\kthree$\emph{-building strategy} as follows.
In her first two turns, \fp claims a path of length $2$:
she first claims some edge and calls it $ab$.
If \secp claimed an edge incident with $ab$, by renaming the vertices if necessary, we can assume that it is incident with $b$.
In any case \fp chooses a fresh vertex, calls it $c$ and claims the edge $bc$.
If \secp does not claim $ac$ in his second turn, \fp does so in her third turn and thereby finishes a monochromatic triangle on $a$, $b$ and $c$.
Otherwise, if \secps first edge $uv$ was disjoint from her first edge $ab$ and she cannot finish her triangle in three turns, then she claims $bu$ in her third turn.
If \secps first edge was not disjoint from \fps first edge, then it is incident with $b$ by assumption.
In this case \fp chooses a fresh vertex $u$ and colours $bu$ in her third turn.
In her fourth turn at least one of the edges $au$, $cu$ are unclaimed and so she can complete a $\kthree$ on at least one of $a$, $b$ and $u$ or $a$, $c$ and $u$.

Note that \fp needs at most five vertices to execute her strategy with the only condition that there are no claimed edges in between these vertices.
Note further that the dependence of the third move of \fp on the first move of \secp is not necessary for \cref{lem:triangle} but it is for \cref{theo:k_four_game}, thus we include it here already.

\begin{lemma}\label{lem:triangle}
    The $\kthree$ building strategy is a winning strategy for \fp in the $\kthree$ game.
    Moreover, \fp needs at most $4$ turns to win the $\kthree$ game with the $\kthree$ building strategy.
\end{lemma}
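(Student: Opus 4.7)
The plan is to verify the strategy by case-splitting on the second player's second move, and to check along the way that the second player never gets the chance to finish his own triangle first. Since the game is symmetric up to renaming of vertices, I will treat the strategy as a purely combinatorial procedure on the edges $ab$, $bc$ and the potential third red edge.

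First I would handle the easy case: if the second player does not claim $ac$ on his second turn, then by the time the first player takes her third turn, the edge $ac$ is still uncoloured; she claims it and completes the red triangle $abc$. This also shows the second player cannot have won earlier, since he has played only two edges by that point.

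For the hard case, suppose the second player does claim $ac$ on his second turn. The first player then plays $bu$, where $u$ is either an endpoint of the second player's first edge (if that edge was disjoint from $ab$) or a fresh vertex (if that edge was incident with $b$). After this third red move, the red graph is the star $ab$, $bc$, $bu$, and the first player wins the game by claiming either $au$ or $cu$. I would then check two things: (i) the second player cannot win on his third turn, because in either sub-case his first two blue edges are vertex-disjoint (in the disjoint sub-case $uv$ and $ac$ share no vertex because $c$ was chosen fresh; in the incident sub-case $bx$ and $ac$ share no vertex because $a,c\notin\{b,x\}$), and three blue edges containing two disjoint ones cannot form a triangle; (ii) by the choice of $u$, neither $au$ nor $cu$ was claimed in either of the second player's first two moves (the key observation is that $u\notin\{a,c\}$ and, in the incident sub-case, $u$ is fresh so $u\neq x$; in the disjoint sub-case $u\neq a,b$ and $c$ is fresh so $c\neq u,v$). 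Hence after the second player's third turn at most one of $au,cu$ is blue, and the first player wins by claiming the other.

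The most delicate step is the bookkeeping in (ii): one has to check all the ways the fresh vertex $c$ and the specially chosen vertex $u$ can fail to coincide with vertices already on the board, and this is where the two sub-cases of the strategy (choosing $u$ fresh vs.\ choosing $u$ on the second player's first edge) are used. Once these identifications are excluded, the count of turns follows immediately: the first player wins on her third turn in the easy case and on her fourth turn in the hard case, giving the promised bound of at most $4$ turns.
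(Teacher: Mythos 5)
Your proposal is correct and follows essentially the same route as the paper's proof: the case split on whether the second player claims $ac$, and the key observation that the second player's first edge and $ac$ are vertex-disjoint (hence his edges cannot form a triangle), are exactly the paper's argument. The only difference is that you spell out explicitly why at least one of $au$, $cu$ is still unclaimed on the fourth turn, a point the paper absorbs into the definition of the strategy itself.
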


\begin{proof}
        The moreover part follows from the definition of the strategy.
	If it takes \fp only three turns to construct her $\kthree$, she wins the game, as the second player has only had two turns up to that point.
	If it takes her four turns, then \secp colours $ac$ in his second turn.
	Let $b_1$ denote the edge \secp claimed in his first turn.
        Either $b_1$ is disjoint from \fps first edge $ab$ or it is incident with $b$.
        Thus it is not incident with $a$.
	As \fp chose $c$ as a fresh vertex in her second turn, $b_1$ is also not incident with $c$.
	So $ac$ and $b_1$ have no vertices in common.
	Thus, the first two blue edges already use four vertices and therefore can not be two edges of a triangle.
\end{proof}

Before we give the complete strategy for \fp for the $\kfour$ game, we need to make some adaptions to the $\kthree$ building strategy, which we call the \emph{modified} $K^3$ \emph{building strategy}.
This strategy will be used on multiple occasions by \fp in her strategy for the $\kfour$ game to build new triangles in particular regions of the board.

In principle, the modified $\kthree$ building strategy consists of the same steps as the $\kthree$ building strategy but in the details \fp is more cautious, since there may be more coloured edges on the board at later points of the game.
We define $\SetOfPi := \braces{p_0,p_1,p_2,p_3,p_4}$.
\begin{itemize}
    \item \fps first edge is an edge that is incident with two vertices of $\SetOfPi$ that have smallest possible \secpdegree.
    \item For \fps second edge, which we call $f$,
    \begin{itemize}
        \item If \secps previous edge is incident with a vertex of $f$ then \fps next edge is incident with that vertex and a vertex of $\SetOfPi$ that is fresh in $G[\SetOfPi]$ with the smallest possible \secpdegree.
        \item If \secps previous edge is incident with two vertices of $\SetOfPi$ and not with a vertex of $f$ then \fps next edge is incident with a vertex of $f$ with smallest possible \secpdegree and a vertex of \secps edge with smallest possible \secpdegree.
        \item Otherwise, if not all vertices of $\SetOfPi\setminus f$ have the same \secpdegree, \fp claims an edge that is incident with a vertex of $f$ and a vertex of $\SetOfPi\setminus f$ of minimum \secpdegree that is not incident with \secps previous edge.
        \item If all vertices of $\SetOfPi\setminus f$ have the same \secpdegree, \fp claims an edge that is incident with a vertex of $f$ and a vertex of $\SetOfPi\setminus f$ that is also neighbouring as many vertices of \secpleft, \secpright as possible in the coloured subgraph.
    \end{itemize}
    \item For \fps third edge,
    \begin{itemize}
        \item if \fp can finish a triangle with her third edge then she does so.
        \item Otherwise, let $v$ be the unique vertex of \fpdegree 2 on the subgraph induced by $\SetOfPi$ and let $u$ and $w$ be the vertices of \fpdegree 1 in the subgraph induced by $\SetOfPi$ and let $a$ and $b$ be the remaining two vertices of $\SetOfPi$. Then for one $x \in \braces{a,b}$ the edges $xv$, $xu$ and $xw$ are unclaimed.
        \FP claims $xv$ for such a vertex $x$.
    \end{itemize}
    \item If there is a fourth turn, then in that turn \fp can finish a monochromatic triangle on $\SetOfPi$.
\end{itemize}

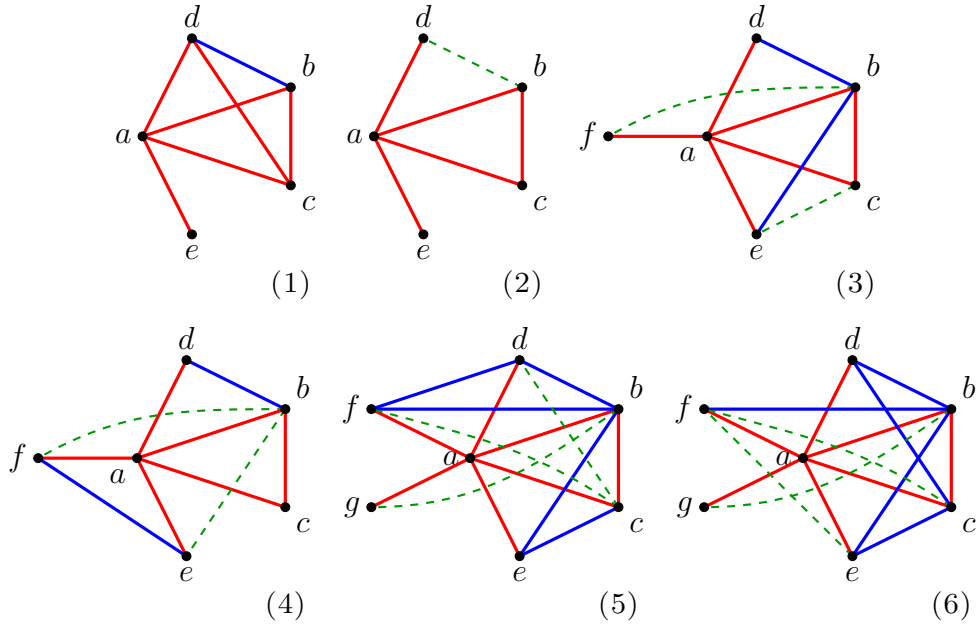
\begin{figure}[ht]
    \centering
    \begin{subcaptiongroup}
    \subcaptionlistentry{Promising graph type 1}
    \label{prom_graph:type_1}
    \begin{tikzpicture}[scale=.65]
        \node at (1,-3) {\captiontext*{}};
        \foreach \coordinate/\name in {(-2, 0)/1,(1, 1)/2,(1, -1)/3,(-1, 2)/4,(-1, -2)/5} \node[shape=coordinate] at \coordinate (\name) {};
        \foreach \x in {2,3,4,5} \draw[red, very thick]  (1) to (\x);
        \foreach \x in {2,4} \draw[red, very thick]  (\x) to (3);
        \node[left] at (1) {$a$};
        \node[below right] at (3) {$c$};
        \node[above right] at (2) {$b$};
        \node[above] at (4) {$d$};
        \node[below] at (5) {$e$};
        \draw[blue, very thick]  (2) to (4);
        \foreach \x in {3,2,1,4,5} \draw[fill] (\x) circle [radius=.09];
    \end{tikzpicture}
    \subcaptionlistentry{Promising graph type 2}
    \label{prom_graph:type_2}
    \begin{tikzpicture}[scale=.65]
        \node at (1,-3) {\captiontext*{}};
        \foreach \coordinate/\name in {(-2, 0)/1,(1, 1)/2,(1, -1)/3,(-1, 2)/4,(-1, -2)/5} \node[shape=coordinate] at \coordinate (\name) {};
        \node[left] at (1) {$a$};
        \node[above right] at (2) {$b$};
        \node[below right] at (3) {$c$};
        \node[above] at (4) {$d$};
        \node[below] at (5) {$e$};
        \draw[thick, dashed, DesiredMoveColour] (2) to (4);
        \foreach \x in {2,3,4,5} \draw[red, very thick]  (1) to (\x);
        \draw[red, very thick]  (2) to (3);
        \foreach \x in {1,2,3,4,5} \draw[fill] (\x) circle [radius=.09];
    \end{tikzpicture}
    \subcaptionlistentry{Promising graph type 3}
    \label{prom_graph:type_3}
    \begin{tikzpicture}[scale=.65]
        \node at (1,-3) {\captiontext*{}};
        \foreach \coordinate/\name in {(-2, 0)/1,(1, 1)/2,(1, -1)/3,(-1, 2)/4,(-1, -2)/5,(-4, 0)/6} \node[shape=coordinate] at \coordinate (\name) {};
        \foreach \x in {2,3,4,5,6} \draw[red, very thick]  (1) to (\x);
        \node[below left] at (1) {$a$};
        \node[above right] at (2) {$b$};
        \node[below right] at (3) {$c$};
        \node[above] at (4) {$d$};
        \node[below] at (5) {$e$};
        \node[left] at (6) {$f$};
        \draw[red, very thick]  (2) to (3);
        \foreach \x in {4,5} \draw[blue, very thick]  (2) to (\x);
        \draw[thick, dashed, DesiredMoveColour] (2) [out=180,in=30] to (6);
        \draw[thick, dashed, DesiredAlternativeMoveColour] (3) to (5);
        \foreach \x in {1,2,3,4,5,6} \draw[fill] (\x) circle [radius=.09];
    \end{tikzpicture}
    
    \subcaptionlistentry{Promising graph type 4}
    \label{prom_graph:type_4}
    \begin{tikzpicture}[scale=.65]
        \node at (1,-3) {\captiontext*{}};
        \foreach \coordinate/\name in {(-2, 0)/1,(1, 1)/2,(1, -1)/3,(-1, 2)/4,(-1, -2)/5,(-4, 0)/6} \node[shape=coordinate] at \coordinate (\name) {};
        \node[below left] at (1) {$a$};
        \node[above right] at (2) {$b$};
        \node[below right] at (3) {$c$};
        \node[above] at (4) {$d$};
        \node[below] at (5) {$e$};
        \node[left] at (6) {$f$};
        \foreach \x in {2,3,4,5,6} \draw[red, very thick]  (1) to (\x);
        \draw[red, very thick]  (2) to (3);
        \draw[blue, very thick]  (2) to (4);
        \draw[blue, very thick]  (5) to (6);
        \draw[thick, dashed, DesiredMoveColour] (2) to (5);
        \draw[thick, dashed, DesiredAlternativeMoveColour] (2) [out=180,in=30] to (6);
        \foreach \x in {1,2,3,4,5,6} \draw[fill] (\x) circle [radius=.09];
    \end{tikzpicture}
    \subcaptionlistentry{Promising graph type 5}
    \label{prom_graph:type_5}
    \begin{tikzpicture}[scale=.65]
        \node at (1,-3) {\captiontext*{}};
        \foreach \coordinate/\name in {(-2, 0)/1,(1, 1)/2,(1, -1)/3,(-1, 2)/4,(-1, -2)/5,(-4, 1)/6,(-4, -1)/7} \node[shape=coordinate] at \coordinate (\name) {};
        \node[left] at (1) {$a$};
        \node[above right] at (2) {$b$};
        \node[below right] at (3) {$c$};
        \node[above] at (4) {$d$};
        \node[below] at (5) {$e$};
        \node[left] at (6) {$f$};
        \node[left] at (7) {$g$};
        \foreach \x in {2,3,4,5,6,7} \draw[red, very thick]  (1) to (\x);
        \draw[red, very thick]  (2) to (3);
        \foreach \x in {4,5,6} \draw[blue, very thick]  (2) to (\x);
        \draw[blue, very thick]  (3) to (5);
        \draw[blue, very thick]  (4) to (6);
        \draw[thick, dashed, DesiredMoveColour] (2) [out=215,in=0] to (7);
        \draw[thick, dashed, DesiredMoveColour] (3) to (4);
        \draw[thick, dashed, DesiredMoveColour] (3) [out=145,in=345] to (6);
        \foreach \x in {1,2,3,4,5,6,7} \draw[fill] (\x) circle [radius=.09];
    \end{tikzpicture}
    \subcaptionlistentry{Promising graph type 6}
    \label{prom_graph:type_6}
    \begin{tikzpicture}[scale=.65]
        \node at (1,-3) {\captiontext*{}};
        \foreach \coordinate/\name in {(-2, 0)/1,(1, 1)/2,(1, -1)/3,(-1, 2)/4,(-1, -2)/5,(-4, 1)/6,(-4, -1)/7} \node[shape=coordinate] at \coordinate (\name) {};
        \node[left] at (1) {$a$};
        \node[above right] at (2) {$b$};
        \node[below right] at (3) {$c$};
        \node[above] at (4) {$d$};
        \node[below] at (5) {$e$};
        \node[left] at (6) {$f$};
        \node[left] at (7) {$g$};
        \foreach \x in {2,3,4,5,6,7} \draw[red, very thick]  (1) to (\x);
        \draw[red, very thick]  (2) to (3);
        \foreach \x in {4,5,6} \draw[blue, very thick]  (2) to (\x);
        \foreach \x in {4,5} \draw[blue, very thick]  (3) to (\x);
        \draw[thick, dashed, DesiredMoveColour] (2) [out=215,in=0] to (7);
        \draw[thick, dashed, DesiredMoveColour] (3) [out=145,in=345] to (6);
        \draw[thick, dashed, DesiredAlternativeMoveColour] (5) to (6);
        \foreach \x in {1,2,3,4,5,6,7} \draw[fill] (\x) circle [radius=.09];
    \end{tikzpicture}
    \end{subcaptiongroup}
    \caption{The six types of promising graphs. In each of the graphs the set of \textcolor{DesiredMoveColour}{dashed} edges represents the set of vulnerable edges. The straight edges represent the edges claimed by \textcolor{red}{\fp} and \textcolor{blue}{\secp}. The non-edges are unclaimed edges.}
    \label{fig:promising_graphs}
\end{figure}
We call a complete graph whose claimed edges induce a graph isomorphic to any of the graphs depicted in \cref{fig:promising_graphs} such that the dashed edges are claimed by \secp, a \emph{promising graph}.
We call the respective dashed edge(s) the \emph{vulnerable edge(s)} of the promising graph of that type.
Additionally, we call a complete graph $H$ that is isomorphic to a promising graph where the vulnerable edges are unclaimed a \emph{realised promising graph} and say that $H$ is \emph{realised} in $G$.
Sometimes we also refer to the intersection of $H$ with $\GF \cup \GS$ as realised promising graph to simplify notation.

\begin{lemma}\label{lem:promising_graphs}
    Let $G$ be a board, $H$ be a promising graph and suppose that $H$ is realised in $G$.
    Suppose that it is \fps turn and further that there is no monochromatic $\kfour$ and no threat by \secp on the board even if the vulnerable edges are added as edges claimed by \secp.
    Then there is a sequence of moves for \fp, each one a threat, the last of which creates two threats, thus \fp wins the game.
\end{lemma}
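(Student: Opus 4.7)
The plan is to prove the lemma by exhaustive case analysis on the six types of promising graphs depicted in \cref{fig:promising_graphs}. For each type I exhibit an explicit sequence of moves for \fp, verifying at each step that the move creates a threat (a red $\kvimi$ whose missing edge is still unclaimed, forcing \secp to block it) and that the final move of the sequence simultaneously creates two threats which \secp cannot both block.

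The candidate moves for \fp can be read off directly from the dashed edges in the figures: each dashed edge represents either an intended move of \fp or an alternative move to be used depending on \secps play. For types 1 and 2 the sequence is very short because the promising graph already contains enough red edges incident with $a$ that a single move (or a short forced exchange) suffices to produce a double threat. For types 3 and 4 the sequence involves one or two forced exchanges in which \fp claims a vulnerable edge---which, being unclaimed in a realised promising graph by definition, is a legal move---thereby extending the dense red neighbourhood of $a$ into a new red $\kvimi$, forcing \secp to respond, and iterating until the double threat is created. Types 5 and 6, which have seven vertices and multiple vulnerable edges, require splitting into sub-cases according to which blocking move \secp is forced into first, but in each sub-case the dashed edges indicate the appropriate line of play.

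The role of the hypothesis---that the board contains no $\kfour$ and no blue threat even when the vulnerable edges are treated as blue---is twofold. First, the absence of a $\kfour$ ensures that \fp has not already lost and that the sequence can actually begin. Second, the strengthened no-blue-threat condition guarantees that when \secp is forced to block one of \fps threats, his blocking move cannot simultaneously create a $\kfour$ or a counter-threat of his own: each blocking edge \secp plays is one of the vulnerable edges of the promising graph, and the hypothesis explicitly rules out the bad configurations that would otherwise permit him to ignore \fps threat. One must verify that this no-threat hypothesis is preserved along the forcing sequence, which follows from the same observation applied turn by turn.

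The main obstacle is the sheer combinatorial bookkeeping across all six types and their internal sub-cases, especially for types 5 and 6 where several orderings of forced responses are possible. The individual checks---"does this edge complete a red $\kvimi$?", "is the missing edge unclaimed?", "does this blocking move create a blue threat?"---are mechanical but numerous, which is precisely why the authors of the paper rely on a computer algorithm for validation of the overall $\kfour$ strategy. A fully written-out human proof would naturally present the six types as six subclaims, tracing the forcing sequence for each and appealing to the no-threat hypothesis at every blocking step to certify that the sequence continues as intended.
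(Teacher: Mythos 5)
Your overall shape matches the paper's --- a case analysis over the six types in which \fp plays a sequence of single threats ending in a double threat, with the strengthened hypothesis guaranteeing that \secps forced replies never yield a counter-threat --- but your identification of the actual moves is wrong in a way that would derail the proof if carried out. The dashed edges in \cref{fig:promising_graphs} are the \emph{vulnerable} edges, and these are precisely the edges that \emph{\secp} is forced to claim as blocking replies along the main line; \fp never claims a vulnerable edge. (Your third paragraph states this correctly --- ``each blocking edge \secp plays is one of the vulnerable edges'' --- which contradicts your second paragraph, where you have \fp claiming vulnerable edges and reading her candidate moves off the dashed lines.) In fact \fps moves are other unclaimed, non-dashed edges, namely $ce$, $cd$, $cf$, $cg$ as in the paper's proof. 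If \fp instead claims the dashed edges, the forcing sequence breaks: in \subref{prom_graph:type_5}, for example, the dashed edge $cd$ creates no threat at all, since the only candidate $\kvimi$ it could set up, on $\braces{a,b,c,d}$, is missing the edge $bd$, which is already blue.

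You also miss the organisational device that eliminates the sub-case explosion you anticipate for types 5 and 6: after \fps first move and \secps uniquely determined forced reply, the subgraph induced on the vertices obtained by discarding $b$ is isomorphic to a promising graph of an earlier type, so each type reduces to a previous one and only type 1 requires a direct double-threat verification. In particular there is no branching over ``which blocking move \secp is forced into first'': at each step \fps move creates a single $\kvimi$ with a single unclaimed completing edge, so \secp either claims that unique edge or loses immediately. A correct write-up therefore needs (i) the correct move sequence for \fp, (ii) the observation that the forced replies \secp makes before the final double threat are exactly the vulnerable edges, which is what makes the strengthened hypothesis sufficient turn by turn, and (iii) either the type-by-type reduction or a fully explicit check of each type.
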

\begin{proof}
    In \subref{prom_graph:type_1}, by claiming $ce$ \fp immediately creates two threats.
    In \subref{prom_graph:type_2}, by claiming $cd$ \fp creates a threat.
    If \secp reacts to that threat by claiming $bd$ then the resulting subgraph is isomorphic to \subref{prom_graph:type_1}, thus she can create two threats by $ce$.
    In \subref{prom_graph:type_3}, \fp can create a threat by claiming $cf$.
    If \secp reacts to the threat by claiming $bf$ then the induced subgraph on $\braces{a,c,d,e,f}$ is isomorphic to \subref{prom_graph:type_2}, thus \fp can win with the corresponding moves presented in \subref{prom_graph:type_2}.
    Similarly in \subref{prom_graph:type_4}, \fp can create a threat by claiming $ce$ and if \secp reacts by claiming $be$, in her next move \fp creates a threat by claiming $cf$.
    If \secp reacts to the threat by claiming $bf$ then the induced subgraph on $\braces{a,c,d,e,f}$ is isomorphic to the graph in \subref{prom_graph:type_1}, that is \fp can create two threats by claiming $cd$.
    In \subref{prom_graph:type_5}, by claiming $cg$ \fp creates a threat and if \secp reacts by claiming $bg$ then the subgraph induced by $\braces{a,c,d,e,f,g}$ is isomorphic to the one in \subref{prom_graph:type_4}.
    Likewise in \subref{prom_graph:type_6}, by claiming $cg$ \fp creates a threat.
    If \secp reacts with $bg$, then the subgraph induced by $\braces{a,c,d,e,f,g}$ is isomorphic to the one in \subref{prom_graph:type_3}.
    In each case if \secp does not claim the respective edge mentioned then \fp could claim it in her next turn and finish a monochromatic $\kfour$.

    Since there is no threat or monochromatic $\kfour$ by \secp on the board by assumption, he cannot create a threat or a monochromatic $\kfour$ by claiming any of the vulnerable edges by assumption, all of his moves are forced moves and in the last move there are two threats by \fp, \fp wins the game as claimed.
\end{proof}
Note that for any of the promising graphs, the given sequence of moves for \fp also leads to a win for \fp if any of the edges of \secp indicated in blue in \cref{fig:promising_graphs} is unclaimed instead.
In particular this is true for the edge $bd$ in \subref{prom_graph:type_6}, which we use in the algorithm.

In the following, we define the $K^4$ \emph{building strategy}.

\begin{enumerate}[label=\textbf{Stage \arabic*:},ref=Stage \arabic*,align = left,noitemsep]
\item[\textbf{Standing assumption:}]
    At all points we assume that \fp will need to react to any immediate threat by \secp.
    This overrides anything else suggested below.
    She also checks at any point whether she can just finish a monochromatic $\kfour$ in her turn.
    If so, she does this and wins.
    
\item\label{item:stage_one}
    \FP constructs a triangle using the $\kthree$ building strategy.

\item\label{item:stage_two}
    Throughout this stage \fp wants to build a monochromatic $\kvimi$, incorporating the $\kthree$ from \ref{item:stage_one}.
    This is the point of the game where \fp must check whether \secp played in such a way that she cannot adhere to her usual strategy, as mentioned in \cref{sec:original_proof}.
    This is the first thing she checks at the start of each turn in \cref{item:stage_two}.
    \begin{itemize}
        \item \FP checks whether the graph induced by the claimed edges is isomorphic to one of the graphs in \cref{fig:stage_5_special_cases} or \cref{fig:stage_6_special_cases}.
            If that is the case, she switches to \labelcref{item:stage_five} or \labelcref{item:stage_six} respectively.
        \item Otherwise, she checks whether she can claim an edge such that there is a monochromatic $\kvimi$ in her colour on the board.
             If that is the case, she claims such an edge and switches to \ref{item:stage_three}.
        \item Otherwise, let $c$ be the vertex of the monochromatic triangle of \fp with maximum \secpdegree.
            \fp chooses a new vertex $g$ and claims $cg$.
            Then she will be able to connect $g$ to $a$ or $b$ in her following turn.
    \end{itemize}

\item\label{item:stage_three}
Generally in this stage \fp wants to claim five edges, all incident with the same particular vertex of the $\kvimi$ constructed in \ref{item:stage_two}.
More specifically, \fp does this as follows:
\begin{itemize}
\item When \fp is in \ref{item:stage_three} for the first time and she cannot immediately win, then there is a subraph of the board that is a coloured $\kfour$ of which five edges are claimed by \fp and one is claimed by \secp.
\FP assigns roles to the four vertices of that subgraph and sticks to them throughout \ref{item:stage_three} and \ref{item:stage_four}, see \cref{fig:K_four_minus} for an illustration.
\begin{figure}[ht]
    \begin{tikzpicture}[scale=.65]
        \foreach \coordinate/\name in {(-2, 0)/secpleft,(1, -1)/secpmain,(1, 1)/fpmain,(4,0)/secpright} \node[shape=coordinate] at \coordinate (\name) {};
        \node[above] at (fpmain) {\fpmain};
        \node[below] at (secpmain) {\secpmain};
        \node[left] at (secpleft) {\secpleft};
        \node[right] at (secpright) {\secpright};
        \foreach \startvertex/\endvertex in {fpmain/secpright,secpright/secpmain,secpmain/secpleft,secpleft/fpmain,fpmain/secpmain} \draw[red, very thick]  (\startvertex) to (\endvertex);
        \draw[blue, very thick]  (secpleft) to (secpright);
        \foreach \node in {fpmain,secpmain,secpleft,secpright} \draw[fill] (\node) circle [radius=.09];
    \end{tikzpicture}
    \caption{The $\kvimi$ and the vertex names which \fp assigns in \labelcref{item:stage_three}.}
    \label{fig:K_four_minus}
\end{figure}
    \begin{itemize}
        \item We call the two vertices of the $\kfour$ that are incident with \secps edge \secpleft and \secpright, with arbitrary assignment.
        \item Of the remaining two vertices, if one of them is contained within a monochromatic triangle of \secp, call this vertex \fpmain.
        If both are not contained in a triangle of \secp, call one of the two with the biggest \secpdegree \fpmain.
        \item We call the remaining vertex \secpmain.
    \end{itemize}
Then \fp chooses a fresh vertex $p_0$ and claims the edge from \fpmain to $p_0$.
\item In a later turn during \ref{item:stage_three}, let $p_i$, $i \in \braces{0,1,2,3,4}$ be the most recent fresh vertex that \fp chose.
\begin{itemize}
\item If \secp claimed the edge between \secpmain and $p_i$,
\begin{itemize} 
\item if $i < 4$, then \fp chooses a fresh vertex $p_{i+1}$ and claims the edge between \fpmain and $p_{i+1}$,
\item if $i = 4$, then \fp switches to \ref{item:stage_four}.
\end{itemize}
\item If \secp claimed an edge between $p_i$ and one of \secpleft, \secpright, then \fp claims the respective other edge.
\item If \secp claimed none of the edges between $p_i$ and one of \secpmain, \secpleft, \secpright, then \fp claims the edge between $p_i$ and \secpmain.
\end{itemize}
\end{itemize}

\item\label{item:stage_four}
In this stage \fp constructs a triangle on $\SetOfPi$ with the modified $\kthree$ building strategy.
At the end of \ref{item:stage_four} that triangle together with \fpmain is a monochromatic $\kfour$.

\item\label{item:stage_five}

\begin{figure}[ht]
    \centering
    \begin{tikzpicture}[scale=.65]
        \foreach \coordinate/\name in {(-2, 0)/a,(1, -1)/b,(1, 1)/c,(-2, 3)/p0,(-1, -3)/x,(2, -3)/y} \node[shape=coordinate] at \coordinate (\name) {};
        \node[left] at (a) {$a$};
        \node[above right] at (c) {$c$};
        \node[below] at (b) {$b$};
        \node[above] at (p0) {$p_0$};
        \draw[red, very thick]  (a) to node[fill=white,inner sep = 2pt,rounded corners] {$0$} (x);
        \draw[blue, very thick]  (c) to node[fill=white,inner sep = 2pt,rounded corners] {$1$} (y);
        \draw[red, very thick]  (a) to node[fill=white,inner sep = 2pt,rounded corners] {$2$} (b);
        \draw[blue, very thick]  (x) to node[fill=white,inner sep = 2pt,rounded corners] {$3$} (b);
        \draw[red, very thick]  (a) to node[fill=white,inner sep = 2pt,rounded corners] {$4$} (c);
        \draw[blue, very thick]  (x) to node[fill=white,inner sep = 2pt,rounded corners,near end] {$5$} (c);
        \draw[red, very thick]  (c) to node[fill=white,inner sep = 2pt,rounded corners,near end] {$6$} (b);
        \draw[blue, very thick]  (y) to node[fill=white,inner sep = 0pt,rounded corners,pos=.35] {$7$} (x);
        \draw[red, very thick]  (c) to node[fill=white,inner sep = 2pt,rounded corners] {$8$} (p0);
        \draw[blue, very thick]  (a) to node[fill=white,inner sep = 2pt,rounded corners] {$9$} (p0);
        \foreach \node in {a,b,c,p0,x,y} \draw[fill] (\node) circle [radius=.09];
    \end{tikzpicture}
    \begin{tikzpicture}[scale=.65]
        \foreach \coordinate/\name in {(-2, 0)/a,(1, -1)/b,(1, 1)/c,(-2, 3)/p0,(-1, -3)/x,(2, -3)/y,(-2, -3)/z} \node[shape=coordinate] at \coordinate (\name) {};
        \node[left] at (a) {$a$};
        \node[above right] at (c) {$c$};
        \node[below] at (b) {$b$};
        \node[above] at (p0) {$p_0$};
        \draw[red, very thick]  (a) to node[fill=white,inner sep = 2pt,rounded corners] {$0$} (x);
        \draw[blue, very thick]  (c) to node[fill=white,inner sep = 2pt,rounded corners] {$1$} (y);
        \draw[red, very thick]  (a) to node[fill=white,inner sep = 2pt,rounded corners] {$2$} (b);
        \draw[blue, very thick]  (x) to node[fill=white,inner sep = 2pt,rounded corners] {$3$} (b);
        \draw[red, very thick]  (a) to node[fill=white,inner sep = 2pt,rounded corners] {$4$} (c);
        \draw[blue, very thick]  (x) to node[fill=white,inner sep = 2pt,rounded corners,near end] {$5$} (c);
        \draw[red, very thick]  (c) to node[fill=white,inner sep = 2pt,rounded corners,near end] {$6$} (b);
        \draw[blue, very thick]  (z) to node[fill=white,inner sep = 0pt,rounded corners] {$7$} (x);
        \draw[red, very thick]  (c) to node[fill=white,inner sep = 2pt,rounded corners] {$8$} (p0);
        \draw[blue, very thick]  (a) to node[fill=white,inner sep = 2pt,rounded corners] {$9$} (p0);
        \foreach \node in {a,b,c,p0,x,y,z} \draw[fill] (\node) circle [radius=.09];
    \end{tikzpicture}
    \begin{tikzpicture}[scale=.65]
        \foreach \coordinate/\name in {(-2, 0)/a,(1, -1)/b,(1, 1)/c,(-2, 3)/p0,(-1, -3)/x,(2, -3)/y} \node[shape=coordinate] at \coordinate (\name) {};
        \node[left] at (a) {$a$};
        \node[above right] at (c) {$c$};
        \node[below] at (b) {$b$};
        \node[above] at (p0) {$p_0$};
        \draw[red, very thick]  (a) to node[fill=white,inner sep = 2pt,rounded corners] {$0$} (x);
        \draw[blue, very thick]  (c) to node[fill=white,inner sep = 2pt,rounded corners] {$1$} (y);
        \draw[red, very thick]  (a) to node[fill=white,inner sep = 2pt,rounded corners] {$2$} (b);
        \draw[blue, very thick]  (x) to node[fill=white,inner sep = 2pt,rounded corners] {$3$} (b);
        \draw[red, very thick]  (a) to node[fill=white,inner sep = 2pt,rounded corners] {$4$} (c);
        \draw[blue, very thick]  (x) to node[fill=white,inner sep = 2pt,rounded corners,near end] {$5$} (c);
        \draw[red, very thick]  (c) to node[fill=white,inner sep = 2pt,rounded corners,near end] {$6$} (b);
        \draw[blue, very thick]  (b) to node[fill=white,inner sep = 2pt,rounded corners,pos=.45] {$7$} (y);
        \draw[red, very thick]  (c) to node[fill=white,inner sep = 2pt,rounded corners] {$8$} (p0);
        \draw[blue, very thick]  (a) to node[fill=white,inner sep = 2pt,rounded corners] {$9$} (p0);
        \foreach \node in {a,b,c,p0,x,y} \draw[fill] (\node) circle [radius=.09];
    \end{tikzpicture}
    \begin{tikzpicture}[scale=.65]
        \foreach \coordinate/\name in {(-2, 0)/a,(1, -1)/b,(1, 1)/c,(-2, 3)/p0,(-1, -3)/x,(2, -3)/y,(3, -3)/z} \node[shape=coordinate] at \coordinate (\name) {};
        \node[left] at (a) {$a$};
        \node[above right] at (c) {$c$};
        \node[below] at (b) {$b$};
        \node[above] at (p0) {$p_0$};
        \draw[red, very thick]  (a) to node[fill=white,inner sep = 2pt,rounded corners] {$0$} (x);
        \draw[blue, very thick]  (c) to node[fill=white,inner sep = 2pt,rounded corners] {$1$} (y);
        \draw[red, very thick]  (a) to node[fill=white,inner sep = 2pt,rounded corners] {$2$} (b);
        \draw[blue, very thick]  (x) to node[fill=white,inner sep = 2pt,rounded corners] {$3$} (b);
        \draw[red, very thick]  (a) to node[fill=white,inner sep = 2pt,rounded corners] {$4$} (c);
        \draw[blue, very thick]  (x) to node[fill=white,inner sep = 2pt,rounded corners,near end] {$5$} (c);
        \draw[red, very thick]  (c) to node[fill=white,inner sep = 2pt,rounded corners,near end] {$6$} (b);
        \draw[blue, very thick]  (z) to node[fill=white,inner sep = 0pt,rounded corners] {$7$} (y);
        \draw[red, very thick]  (c) to node[fill=white,inner sep = 2pt,rounded corners] {$8$} (p0);
        \draw[blue, very thick]  (a) to node[fill=white,inner sep = 2pt,rounded corners] {$9$} (p0);
        \foreach \node in {a,b,c,p0,x,y,z} \draw[fill] (\node) circle [radius=.09];
    \end{tikzpicture}
    \caption{The four cases in which \fp applies her strategy of \ref{item:stage_five}.}
    \label{fig:stage_5_special_cases}
\end{figure}

If after $5$ turns $\GF \cup \GS$ is isomorphic to any of the four graphs depicted in \cref{fig:stage_5_special_cases}, then \fp diverts from \labelcref{item:stage_two} of her regular strategy and instead plays as follows.
Her main goal is to either have a promising graph contained in the board as an induced subgraph or an independent set of five vertices in $\GF \cup \GS$ all with a common neighbour in $\GF$.
Let us call the vertices of the monochromatic red triangle in the board positions $a$, $b$ and $c$, where $c$ is the vertex of \fpdegree three and \secpdegree two, $a$ is the other vertex of \fpdegree 3 and $b$ is the last vertex of the triangle.
Furthermore let us give the name $p_0$ to the vertex of \fpdegree and \secpdegree one.
\begin{itemize}
    \item In her sixth turn \fp chooses a fresh vertex $p_1$ and claims $cp_1$.
    \item In her seventh turn, 
    \begin{itemize} \item if \secp did not colour $ap_1$ or $bp_1$ in his sixth turn, then \fp claims $ap_1$ (which is a threat to \secp in this case) and switches to \ref{item:stage_three} with the $\kvimi$ induced by $\braces{a,b,c,p_1}$ and the $p_i$ as assigned in this stage for $i \in \braces{0,1}$.
    \item Otherwise \secp claimed either $ap_1$ or $bp_1$ in his sixth turn.
        Then \fp chooses a fresh vertex $p_2$ and claims $cp_2$.
    \end{itemize}
    \item In \fps eighth turn,
    \begin{itemize}
        \item if \secp did not colour $ap_2$ or $bp_2$ or $p_1p_2$ in her seventh turn, then \fp claims $ap_2$ in her eighth turn and again switches to \ref{item:stage_three} with the $\kvimi$ induced by $\braces{a,b,c,p_2}$ and the $p_i$ as assigned in this stage for $i \in \braces{0,1,2}$.
        \item Otherwise \secp claimed either $ap_2$, $bp_2$ or $p_1p_2$ in his seventh turn.
            Then \fp chooses a fresh vertex $p_3$, claims $cp_3$ in her eighth turn and switches to \labelcref{item:stage_six}.
    \end{itemize}
\end{itemize}
\end{enumerate}
\begin{enumerate}[resume*]
\item\label{item:stage_six}
\begin{figure}[ht]
    \centering
    \begin{tikzpicture}[scale=.65]
        \foreach \coordinate/\name in {(-2, 0)/a,(1, -1)/b,(1, 1)/c,(-1, -3)/p0,(-2.5, -2)/x} \node[shape=coordinate] at \coordinate (\name) {};
        \node[below] at (p0) {$p_0$};
        \draw[red, very thick]  (a) to node[fill=white,inner sep = 2pt,rounded corners] {$0$} (p0);
        \draw[blue, very thick]  (a) to node[fill=white,inner sep = 2pt,rounded corners] {$1$} (x);
        \draw[red, very thick]  (a) to node[fill=white,inner sep = 2pt,rounded corners] {$2$} (b);
        \draw[blue, very thick]  (p0) to node[fill=white,inner sep = 2pt,rounded corners] {$3$} (b);
        \draw[red, very thick]  (a) to node[fill=white,inner sep = 2pt,rounded corners] {$4$} (c);
        \draw[blue, very thick]  (p0) to node[fill=white,inner sep = 2pt,rounded corners,near end] {$5$} (c);
        \draw[red, very thick]  (b) to node[fill=white,inner sep = 2pt,rounded corners] {$6$} (c);
        \draw[blue, very thick]  (p0) to node[fill=white,inner sep = 2pt,rounded corners] {$7$} (x);
        \foreach \node in {a,b,c,p0,x} \draw[fill] (\node) circle [radius=.09];
    \end{tikzpicture}
    \begin{tikzpicture}[scale=.65]
        \foreach \coordinate/\name in {(-2, 0)/a,(1, -1)/b,(1, 1)/c,(-1, -3)/p0,(-2.5, -2)/x,(1, -3)/y} \node[shape=coordinate] at \coordinate (\name) {};
        \node[below] at (p0) {$p_0$};
        \draw[red, very thick]  (a) to node[fill=white,inner sep = 2pt,rounded corners] {$0$} (p0);
        \draw[blue, very thick]  (a) to node[fill=white,inner sep = 2pt,rounded corners] {$1$} (x);
        \draw[red, very thick]  (a) to node[fill=white,inner sep = 2pt,rounded corners] {$2$} (b);
        \draw[blue, very thick]  (p0) to node[fill=white,inner sep = 2pt,rounded corners] {$3$} (b);
        \draw[red, very thick]  (a) to node[fill=white,inner sep = 2pt,rounded corners] {$4$} (c);
        \draw[blue, very thick]  (p0) to node[fill=white,inner sep = 2pt,rounded corners,near end] {$5$} (c);
        \draw[red, very thick]  (b) to node[fill=white,inner sep = 2pt,rounded corners] {$6$} (c);
        \draw[blue, very thick]  (p0) to node[fill=white,inner sep = 2pt,rounded corners] {$7$} (y);
        \foreach \node in {a,b,c,p0,x,y} \draw[fill] (\node) circle [radius=.09];
    \end{tikzpicture}
    \caption{The two cases in which \fp applies her strategy of \ref{item:stage_six}.}
    \label{fig:stage_6_special_cases}
\end{figure}
\FP may enter this stage in two different ways.
The first is that she enters this stage after exiting \labelcref{item:stage_five}.
The second is if after four turns $\GF \cup \GS$ is isomorphic to one of the two graphs depicted in \cref{fig:stage_6_special_cases}, then in \labelcref{item:stage_two} \fp diverts from her regular strategy and instead plays as follows.
Only in the latter case in the fifth turn she gives the name $p_0$ to the vertex of \fpdegree one and $a$ to the vertex of \fpdegree three.
In either case, from turn eight or turn six onwards respectively, \fp goes through the list of conditions below and executes the strategy of the first item from the list whose conditions are met.
\begin{enumerate}
    \item \FP checks whether she secured a way to win in a previous turn, defined from one of the options.
    \item \FP checks, whether there is a realised promising graph present as a subgraph of the board such that the corresponding promising graph fulfils the conditions of \cref{lem:promising_graphs}.
            If there is one of those present, then this lets her continue with a threat each turn until she finally wins as proved in \cref{lem:promising_graphs}.
            Thus she saves the information on how to continue for the next turns.
    \item \FP checks, whether there are five $p_i$ on the board and also checks whether there are no edges in between any two of them.
            Then she can build a triangle on the five vertices with the modified $\kthree$ building strategy.
            If this applies, she saves the information that she wants to continue to build the triangle on these five vertices.
    \item \FP checks, whether there are fewer than five vertices $p_i$, $i \in \braces{0,1,2,3,4}$.
            If this applies, she finds a fresh vertex, calls it $p_j$ where $j := \max{\braces{i + 1 \colon p_i \text{ is defined}}}$ and claims $cp_j$.
\end{enumerate}

\end{enumerate}
Note that one could get the impression that there can be courses of the game where \fp uses her strategy from \labelcref{item:stage_six} but the prerequisites are not met for any of the considered situations.
In practice this never occurs, as one is able to deduce from the proof.

\begin{theorem}\label{theo:k_four_game}
    The $K^4$ building strategy is a winning strategy for \fp in the $\kfour$ game.
    Moreover, \fp needs at most 21 moves to win the $\kfour$ game with the $\kfour$ building strategy.
\end{theorem}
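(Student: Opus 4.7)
The plan is to verify each of the six stages of the $\kfour$ building strategy separately and then chain them together. For each stage I would prove an invariant of the form: if the game enters this stage with the expected board configuration, then \fp can execute the stage in a bounded number of turns and either win outright or enter the next stage with its required configuration. The per-stage turn bounds then sum to the claimed 21.

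For \labelcref{item:stage_one}, \cref{lem:triangle} immediately delivers a monochromatic red triangle on vertices $a,b,c$ within four turns. For \labelcref{item:stage_two}, the aim is to add a fresh vertex $g$ at $c$ (the triangle vertex of maximum \secpdegree) and then to connect $g$ to $a$ or $b$; provided $\GF\cup\GS$ is not isomorphic to one of the exceptional boards of \cref{fig:stage_5_special_cases} or \cref{fig:stage_6_special_cases}, \secp cannot simultaneously block both completions, so a $\kvimi$ is formed. For \labelcref{item:stage_three} I would show by induction on $i\in\braces{0,\dots,4}$ that after \fp plays $\fpmainInMath p_i$, the set $\SetOfPi\cap V(\GF\cup\GS)$ is independent in $\GF\cup\GS$, and that at each step \secps response falls into exactly one of the three branches of the rule governing \labelcref{item:stage_three}; in each branch \fps reply either neutralises the potential threat through the pair $\braces{\secpleftInMath,\secprightInMath}$ or extends the fan from $\fpmainInMath$. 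At the end of \labelcref{item:stage_three} we therefore have $\fpmainInMath$ joined in red to all five $p_i$, no edge of either colour inside $\SetOfPi$, and bounded blue degree from each $p_i$ into $\braces{\secpmainInMath,\secpleftInMath,\secprightInMath}$. \labelcref{item:stage_four} then runs the modified $\kthree$ building strategy on $\SetOfPi$; the cautious choices (minimising \secpdegree, avoiding edges incident with \secps last move) are designed precisely so that the argument of \cref{lem:triangle} goes through despite the extra blue edges already present on $\SetOfPi$, and the resulting red triangle together with $\fpmainInMath$ is the desired $\kfour$.

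The main obstacle lies in \labelcref{item:stage_five} and \labelcref{item:stage_six}, where the strategy no longer has the clean tree structure of the main line. For each of the four boards of \cref{fig:stage_5_special_cases} and the two boards of \cref{fig:stage_6_special_cases} one has to track the moves of both players explicitly, showing that the fan of $c p_j$ edges drawn by \fp either (i) creates a $\kvimi$ on $\braces{a,b,c,p_j}$ that lets \fp re-enter \labelcref{item:stage_three}, or (ii) realises one of the six promising graphs of \cref{fig:promising_graphs}, in which case \cref{lem:promising_graphs} finishes the game, or (iii) after five extensions produces five pairwise non-adjacent $p_j$ to which the modified $\kthree$ building strategy can be applied. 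The critical combinatorial claim is that the priority list in \labelcref{item:stage_six} is exhaustive, i.e. as soon as five $p_j$ are present, one of the three winning options must trigger; this is precisely the step that the authors validate by computer enumeration. A purely human proof would proceed by case analysis on which blue edges \secp has placed among $\braces{a,b,c,p_0,\dots,p_4}$ together with the auxiliary vertices appearing in \cref{fig:stage_5_special_cases,fig:stage_6_special_cases}, using the invariant that \secp is never simultaneously forced to block multiple threats in order to keep the case tree bounded. Summing the per-stage turn bounds yields the claimed overall bound of at most 21.
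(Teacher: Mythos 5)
There is a genuine gap: your text is a proof \emph{plan}, not a proof, and the parts it defers are exactly the parts where the difficulty of this theorem lives. The paper's own proof does not argue stage by stage at all; it is an exhaustive, computer-verified enumeration of every course of the game against the implemented strategy (the functions \texttt{FP\_edge()} and \texttt{move()}), with only two hand-proved reduction claims (that restricting to non-isomorphic board states after \labelcref{item:stage_two} loses nothing, and that \secps responses in \labelcref{item:stage_three} may be normalised up to isomorphism via the threat argument). Your proposal instead asserts, without carrying out the case analysis, the statements that actually need verification: that in \labelcref{item:stage_two} every board on which the $\kvimi$ cannot be completed is isomorphic to one of the six exceptional boards of \cref{fig:stage_5_special_cases,fig:stage_6_special_cases}; that the modified $\kthree$ building strategy in \labelcref{item:stage_four} succeeds despite the blue edges \secp has accumulated on $\SetOfPi\cup\braces{\secpmainInMath,\secpleftInMath,\secprightInMath}$ (you say the cautious choices are ``designed precisely so that'' \cref{lem:triangle} goes through, which is an appeal to intent, not an argument); and that the priority list in \labelcref{item:stage_six} always triggers --- the very point you concede is ``precisely the step that the authors validate by computer enumeration.'' \Cref{sec:original_proof} of the paper exists because Beck's earlier proof attempted exactly this kind of summarised hand analysis and was wrong in cases L(3) and L(4); a proof that says ``one has to track the moves of both players explicitly'' without doing so inherits the same risk and cannot be accepted.

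Two further concrete problems. First, your \labelcref{item:stage_three} invariant that $\SetOfPi$ is independent in $\GF\cup\GS$ needs the threat argument of the paper's \cref{claim:can_shorten_algorithm_ins_Stage_three} (if \secp plays $p_ip_j$ instead of $p_i\secpmainInMath$, \fp wins by claiming $p_i\secpmainInMath$), and it fails in \labelcref{item:stage_six}, where the strategy explicitly has to test whether the five $p_i$ are pairwise non-adjacent. Second, the bound of $21$ moves cannot be obtained by ``summing per-stage turn bounds'': the standing assumption forces \fp to interleave defensive reactions to \secps threats with her constructive moves, so the number of turns spent in each stage is not fixed, and the maximum over all plays is exactly what the enumeration computes. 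To repair the proof you would either have to carry out the full case analysis explicitly (including all forced-move interleavings) or, as the authors do, delegate it to a verified exhaustive search.
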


\begin{proof}
    We prove this \namecref{theo:k_four_game} via the algorithm given in \texttt{K\_4\_game.py} and \texttt{functions.py}.
    There we implement the $\kfour$ building strategy in the function \texttt{FP\_edge()}, while \texttt{move()} recursively goes through every possible course of the game.
    For the exact implementation, we refer to the comments in the code.
    We do however make two modifications that are purely made for runtime considerations.\footnote{The runtime for the program with the adaptions but without multiprocessing on a MacBookPro with an Intel i5 1.4GHz Quad-Core processor is about 12h.}
    First, we make a list of possible non-isomorphic board states after \labelcref{item:stage_two}.
    \begin{claim}
        There are no additional courses of the game by considering other, isomorphic board states after \labelcref{item:stage_two}. \hfill $\rule{1.2ex}{1.2ex}$ \hspace{2pt}
    \end{claim}
    Second, in \labelcref{item:stage_three} we only consider that \secp claims the edge incident with \secpmain and $p_i$ for $i \in \braces{0,1,2}$  when \fp claimed the edge incident with \fpmain and $p_i$ in her turn and no other possible edge.
    Additionally, for $p_3$ we only allow that \secp claims an edge incident with $p_3$ and one of \secpmain, \secpleft, \secpright.
    \begin{claim}\label{claim:can_shorten_algorithm_ins_Stage_three}
        There are no additional courses of the game if \secp does not react to the $p_i$ in the implemented way in \ref{item:stage_three} up to isomorphism.
    \end{claim}
    \begin{claimproof}
        Suppose that it is \secps turn and \fp chose a fresh vertex $p_i$ and claimed $p_i \fpmainInMath $ in her previous turn.
        In each of the mentioned turns, if \secp can not create a threat somewhere on the board himself, he must claim an edge incident with $p_i$ and one of \secpmain, \secpleft, \secpright because otherwise \fp can win in two turns by claiming the edge $p_i \secpmainInMath $.
        If he can create a threat somewhere else on the board, he can also do so at the beginning of \labelcref{item:stage_four} and since \fp claims edges incident with vertices that are fresh at some point during \labelcref{item:stage_three}, edges claimed by \fp during \labelcref{item:stage_three} will not mitigate that threat.
        Thus, if anything \secp has more opportunities to create threats at the beginning of \labelcref{item:stage_four}.

        Now suppose that at some turn during \labelcref{item:stage_three} \secp claims $p_i\secpleftInMath$.
        Then \fp claims $p_i\secprightInMath$ in her following turn.
        This creates a threat to which \secp must react by claiming $p_i\secpmainInMath$.
        For a $j \neq i$ \secp cannot claim $p_j \secpleftInMath$ any more because by then claiming $p_j \secprightInMath$, \fp creates two threats and thus wins the game.

        For a similar reason \secp can claim an edge incident with a $p_i$ and \secpright at most once.

        Thus \secp can claim an edge incident with a $p_i$ and \secpleft at most once and only an edge incident with a different $p_j$ and \secpright at most once.
        After that he must always claim the edge $p_k \secpmainInMath $ right after \fp claimed $p_k \fpmainInMath$.
        Thus any possible board state after \labelcref{item:stage_three} will be isomorphic to one we implemented in the algorithm, apart from threats by \secp.
        This, together with the fact that this does not infringe on the possibilities for \secp to create threats proves the \namecref{claim:can_shorten_algorithm_ins_Stage_three}.
    \end{claimproof}
    This finishes the proof.
\end{proof}

\begin{bibdiv}
	\begin{biblist}
		
		\bib{B02}{article}{
			author={Beck, József},
			title={Ramsey games},
			date={2002},
			ISSN={0012-365X},
			journal={Discrete Mathematics},
			volume={249},
			number={1},
			pages={3\ndash 30},
			url={https://www.sciencedirect.com/science/article/pii/S0012365X01002242},
			note={Combinatorics, Graph Theory, and Computing},
		}
		
		\bib{B08}{book}{
			author={Beck, J{\'o}zsef},
			title={Combinatorial games: tic-tac-toe theory},
			publisher={Cambridge University Press Cambridge},
			date={2008},
			volume={114},
		}
		
		\bib{BEG23}{article}{
			author={Bowler, Nathan},
			author={Emde, Marit},
			author={Gut, Florian},
			title={The {$K^{\aleph_0}$} game: Vertex colouring},
			date={2023},
			journal={Mathematika},
			volume={69},
			number={3},
			pages={584\ndash 599},
			url={https://londmathsoc.onlinelibrary.wiley.com/doi/abs/10.1112/mtk.12196},
		}
		
		\bib{D17}{book}{
			author={Diestel, Reinhard},
			title={Graph {T}heory},
			edition={5},
			publisher={Springer},
			date={2017},
			ISBN={978-3-662-53621-6},
		}
		
		\bib{GG55}{article}{
			author={Greenwood, R.~E.},
			author={Gleason, A.~M.},
			title={Combinatorial relations and chromatic graphs},
			date={1955},
			journal={Canadian Journal of Mathematics},
			volume={7},
			pages={1–7},
		}
		
		\bib{HKNPRS17}{article}{
			author={Hefetz, Dan},
			author={Kusch, Christopher},
			author={Narins, Lothar},
			author={Pokrovskiy, Alexey},
			author={Requilé, Clément},
			author={Sarid, Amir},
			title={Strong ramsey games: Drawing on an infinite board},
			date={2017},
			ISSN={0097-3165},
			journal={Journal of Combinatorial Theory, Series A},
			volume={150},
			pages={248\ndash 266},
			url={https://www.sciencedirect.com/science/article/pii/S0097316517300353},
		}
		
		\bib{HKSS14}{book}{
			author={Hefetz, Dan},
			author={Krivelevich, Michael},
			author={Stojaković, Miloš},
			author={Szabó, Tibor},
			title={Positional {Games}},
			language={en},
			publisher={Springer Basel},
			address={Basel},
			date={2014},
			ISBN={978-3-0348-0824-8 978-3-0348-0825-5},
			url={http://link.springer.com/10.1007/978-3-0348-0825-5},
		}
		
	\end{biblist}
\end{bibdiv}

\end{document}